\documentclass[preprint,12pt]{larticle}
\usepackage[a4paper, total={6.18in, 10in}]{geometry}
\usepackage[utf8]{inputenc}

\usepackage{amssymb, amsmath, amsfonts, amssymb, amsthm}
\usepackage{mathtools}

\usepackage[symbol]{footmisc}

\newtheorem{theorem}{Theorem}
\newtheorem{lemma}{Lemma}
 
\theoremstyle{definition}

% Shortcuts
\newcommand{\set}[1]{\left\{#1\right\}}
\newcommand{\seq}[1]{\left[#1\right]}
\newcommand{\abs}[1]{\left\vert#1\right\vert}
\newcommand{\rep}[1]{\langle#1\rangle}

\newcommand{\ceil}[1]{{\left\lceil #1 \right\rceil}}

\newcommand{\Q}{\mathcal{Q}}
\newcommand{\T}{\mathcal{T}}
\newcommand{\K}{\mathcal{L}}
\newcommand{\C}{\mathcal{C}}
\newcommand{\X}{\mathcal{X}}

\begin{document}

\begin{frontmatter}

\title{A Generalized Hamming Distance of Sequence Patterns}

\author{Pengyu Liu\footnotesize{$^{1,2}$}\footnote{To whom correspondence should be addressed; e-mail: penliu@ucdavis.edu.}}

\author{Jingzhou Na\footnotesize{$^2$}}

\address{\footnotesize{$^1$}Department of Microbiology and Molecular Genetics, University of California, Davis, Davis, CA 95616}
\address{\footnotesize{$^2$}Department of Mathematics, Simon Fraser University, Burnaby, BC V5A 1S6, Canada}

\begin{abstract}

We define sequence patterns of length $n$ and level $\ell$ to be equivalence classes of sequences that have $n$ elements from the set of $\ell$ integer symbols $\set{1,2,\ldots,\ell}$ with no restriction on repetition, where the equivalence relation is induced by symbol relabeling without swapping positions of symbols. 
We define a distance for a set of $k$ sequence patterns of length $n$ and level $\ell$ by generalizing the Hamming distance between sequences.
We compute the maximal distance for $k$ sequence patterns of length $n$ and level $\ell$ and demonstrate how to calculate the exact distance between a pair of length-$n$ level-$\ell$ sequence patterns.

\end{abstract}

\end{frontmatter}

% \linenumbers

\section{Introduction}
\label{sec1}

Consider sequences of length $n$ whose elements are from a set of $\ell$ integer symbols $L = \set{1,2,\ldots,\ell}$ with no restriction on repetition. 
We say that the sequences are length-$n$ level-$\ell$ sequences.
We define length-$n$ level-$\ell$ sequence patterns to be equivalence classes of length-$n$ level-$\ell$ sequences under the action of the symmetric group of order $\ell$ on individual elements of the sequences.
Note that the group action only relabels symbols and does not change element positions. 
The Hamming distance between a pair of sequences of equal length is defined to be the number of positions with different elements in the sequences, which is an important tool in coding theory for error detecting \cite{Hamming1950}.
In this paper, we generalize the Hamming distance for a pair of length-$n$ level-$\ell$ sequence patterns to be the minimal Hamming distance between a pair of sequences from different sequence patterns. 
We compute the maximal Hamming distance for all pairs of length-$n$ level-$\ell$ sequence patterns and the exact Hamming distance between a pair of length-$n$ level-$\ell$ sequence patterns.
We also define the Hamming distance for a set of $k > 2$ sequence patterns of length $n$ and level $\ell$ and compute the maximal Hamming distance for all sets of $k$ sequence patterns of length $n$ and level $\ell$.

We note that a sequence in this paper is a finite word in the context of combinatorics on words, where the set of symbols $L$ is called an alphabet for the word \cite{Lothaire1997}. 
When $n = \ell$, a length-$n$ level-$n$ sequence pattern is also called a rhyme scheme for an $n$-line stanza, and it is known that the number of rhyme schemes for $n$-line stanzas is the Bell number~$B_n$ \cite{Riordan1979}.
Actually, when $n \leq \ell$, if we group positions with identical symbols in a length-$n$ level-$\ell$ sequence pattern as subsets, then we have a partition of set with $n$ elements \cite{Rota1964}. So, there is an one-to-one correspondence between partitions of a set with $n$ elements and length-$n$ level-$\ell$ sequence patterns when~$n \leq \ell$.
If $n > \ell$, then there must be symbol repetition in a length-$n$ level-$\ell$ sequence, so the one-to-one correspondence is restricted to the set partitions with at most $\ell$ subsets.
Although sequence patterns are equivalent to set partitions, we find that the terminology of sequences is more convenient when discussing the Hamming distance. 
Therefore, we use the term ``sequence pattern'' throughout the paper.

\section{Sequence patterns}
\label{sec2}

\subsection{Definitions}
Let $n$ and $\ell$ be two positive integers and $N = \set{1,2,3,\ldots,n}$ and $L = \set{1,2,3,\ldots,\ell}$ be the sets of positive integers no greater than $n$ and $\ell$ respectively.
We call $N$ the {\em index set} and an element in $N$ an {\em index}.
We call $L$ the {\em symbol set} and an element in $L$ a {\em symbol}.
We define a {\em sequence} to be a function $q:N\to L$, where $n$ is the {\em length} of the sequence, and $\ell$ is the {\em level} of the sequence. 
Note that the level of a sequence $q$ is the number of possible symbols present in $q(N)$ rather than the actual number of symbols appeared in $q(N)$, and the level of a sequence is determined by the symbol set $L$ instead of the image $q(N)$. 
We denote a sequence by $q = \seq{x_1;x_2;\ldots;x_n}$, and we say that each $x_i$ is an {\em element} of the sequence.
We separate the elements of a sequence by semicolons to indicate that we write a sequence vertically as a column.
In this paper, letters $p,q$ will be used to denote sequences and $x,y$ will be used to denote the elements in a sequence.

Let $S_\ell$ be the symmetric group of order~$\ell$. 
Two length-$n$ level-$\ell$ sequences $q = \seq{x_1;x_2;\ldots;x_n}$ and $p = \seq{y_1;y_2;\ldots;y_n}$ are {\em equivalent} if there exists a permutation $\phi\in S_\ell$ such that $\phi(q) = \seq{\phi(x_1);\phi(x_2);\ldots;\phi(x_n)} = \seq{y_1;y_2;\ldots;y_n} = p$.
For instance, among the three length-$5$ level-$3$ sequences in Example (\ref{exeq}), the sequences $q_1$ and $q_2$ are equivalent under the permutation $\phi = (123) \in S_3$, that is $\phi(q_1) = q_2$, while no permutation can map $q_1$ or $q_2$ to $q_3$, so $q_3$ is not equivalent to $q_1$ or~$q_2$.
Here, the permutation $\phi = (123)$ is written in cycle notation, and we use the cycle notation for permutations throughout the paper unless otherwise stated.

\begin{equation}
q_1 = \begin{bmatrix} 1 \\ 1 \\ 3 \\ 2 \\ 1 \end{bmatrix} \quad q_2 = \begin{bmatrix} 2 \\ 2 \\ 1 \\ 3 \\ 2 \end{bmatrix} \quad q_3 = \begin{bmatrix} 1 \\ 1 \\ 2 \\ 2 \\ 1 \end{bmatrix}
\label{exeq}
\end{equation}

Let $\Q_{n,\ell}$ be the set of all length-$n$ level-$\ell$ sequences.
The equivalence relation induced by permutations in $S_\ell$ partitions~$\Q_{n,\ell}$.
We define a {\em sequence pattern} to be an equivalence class of~$\Q_{n,\ell}$.
We denote a sequence pattern by $t$ or $\rep{q}$, where $q$ is a sequence in the sequence pattern~$t$.
We say that the sequence $q$ is a {\em representative} of the sequence pattern~$t$, and that the sequence pattern $t$ is {\em generated} by $q$.
We define the {\em length} of a sequence pattern $\rep{q}$ to be the length of $q$, and similarly, the {\em level} of the sequence pattern $\rep{q}$ to be the level of $q$.
We denote the set of all length-$n$ level-$\ell$ sequence patterns by~$\T_{n,\ell}$.
For instance, the six sequences in Example (\ref{exseqtop}) form a length-$3$ level-$3$ sequence pattern.

\begin{equation}
q_1 = \begin{bmatrix} 1 \\ 1 \\ 2  \end{bmatrix} \quad q_2 = \begin{bmatrix} 1 \\ 1 \\ 3 \end{bmatrix} \quad q_3 = \begin{bmatrix} 2 \\ 2 \\ 1 \end{bmatrix} \quad q_4 = \begin{bmatrix} 2 \\ 2 \\ 3  \end{bmatrix} \quad q_5 = \begin{bmatrix} 3 \\ 3 \\ 1 \end{bmatrix} \quad q_6 = \begin{bmatrix} 3 \\ 3 \\ 2 \end{bmatrix}
\label{exseqtop}
\end{equation}

We say that a sequence is {\em constant} if all of its elements are identical, and a sequence pattern is {\em constant} if it contains a constant sequence.

\subsection{Enumeration}

We count the number of length-$n$ level-$\ell$ sequence patterns and denote the number by~$\abs{\T_{n,\ell}}$.
We say that a length-$n$ level-$\ell$ sequence $q: N\to L$ is in {\em standard order} if for any $i \in N$ with $q(i) > 1$, there exists a $j < i \in N$ such that $q(j) = q(i) - 1$.
We say that $q$ is a {\em standard sequence} of length $n$ and level $\ell$.
For instance, only $q_3$ among sequences in Example~(\ref{exeq}) is a standard sequence, and only $q_1$ among sequences in Example~(\ref{exseqtop}) is a standard sequence.
It is easy to show that every length-$n$ level-$\ell$ sequence can be mapped to a standard sequence by a permutation in $S_{\ell}$, and that there exists no permutation in $S_{\ell}$ that can map one standard sequence to a different standard sequence. 
Hence, every length-$n$ level-$\ell$ sequence pattern contains one and only one standard sequence, and counting the number of length-$n$ level-$\ell$ sequence patterns is equivalent to counting the number of standard sequences of length $n$ and level $\ell$. 
We denote the set of all length-$n$ level-$\ell$ standard sequences by $Q^*_{n,\ell}$ and the number of length-$n$ level-$\ell$ standard sequences by~$|Q^*_{n,\ell}|$.
We have $\abs{\T_{n,\ell}} = |Q^*_{n,\ell}|$,  and the number $|Q^*_{n,\ell}|$ is computed using Stirling numbers of the second kind~\cite{Arndt2016}.

\begin{theorem}[Arndt-Sloane \cite{Arndt2016}]\label{theorenum2}
The number of length-$n$ level-$\ell$ sequences that are in standard order is given by Formula~(\ref{enum2}). 

\begin{equation}
|Q^*_{n,\ell}| = \sum_{m=1}^{\ell}  \sum_{i=0}^{m} \frac{(-1)^i}{i!} \frac{(m-i)^n}{(m-i)!}
\label{enum2}
\end{equation}
\end{theorem}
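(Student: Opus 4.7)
The plan is to reduce the count to a sum of Stirling numbers of the second kind and then invoke the classical explicit formula for $S(n,m)$. The key observation is that a standard sequence is essentially a labelled set partition, where the labelling is determined canonically by the standard-order condition.

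First I would let $m$ denote the number of distinct symbols appearing in a given standard sequence $q \in Q^*_{n,\ell}$. The standard-order condition forces the set of symbols used to be exactly $\{1,2,\ldots,m\}$: the first position contributes symbol $1$, and whenever a new symbol is introduced it must be the successor of the largest previously used symbol. Thus $1 \le m \le \min(n,\ell)$, and I can stratify $Q^*_{n,\ell}$ by $m$.

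Next I would exhibit a bijection between standard sequences of length $n$ using exactly $m$ symbols and set partitions of $N = \{1,\ldots,n\}$ into exactly $m$ nonempty blocks. Given a standard sequence $q$, group indices by the symbol they map to; the standard-order condition is precisely what guarantees that the labelling of the $m$ blocks by $\{1,\ldots,m\}$ is canonical (the block containing the smallest index in $N\setminus(B_1\cup\cdots\cup B_{j-1})$ is labelled $j$), so the map to the unlabelled set partition is a bijection. Consequently the number of standard sequences with exactly $m$ symbols equals the Stirling number of the second kind $S(n,m)$, and
\begin{equation*}
|Q^*_{n,\ell}| = \sum_{m=1}^{\ell} S(n,m).
\end{equation*}

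Finally I would substitute the classical inclusion–exclusion formula
\begin{equation*}
S(n,m) = \frac{1}{m!}\sum_{i=0}^{m}(-1)^i \binom{m}{i}(m-i)^n = \sum_{i=0}^{m} \frac{(-1)^i}{i!}\,\frac{(m-i)^n}{(m-i)!},
\end{equation*}
which is obtained by counting surjections $N\to\{1,\ldots,m\}$ via inclusion–exclusion on the image and then dividing by $m!$ to forget the ordering of the blocks. Summing over $m$ from $1$ to $\ell$ produces exactly Formula~(\ref{enum2}).

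There is no real obstacle here; the only subtlety is making sure the bijection between standard sequences and set partitions is stated cleanly, in particular that the standard-order condition removes the $S_\ell$-ambiguity so that each partition has a unique standard representative. The inclusion–exclusion identity for $S(n,m)$ is classical and can simply be cited.
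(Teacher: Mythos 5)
Your argument is correct. Note, however, that the paper does not actually prove Theorem~\ref{theorenum2}: it is quoted from the reference \cite{Arndt2016} without proof, and the surrounding text only remarks that the count ``is computed using Stirling numbers of the second kind.'' Your proof supplies exactly the standard justification for that remark: standard sequences are restricted growth strings, the standard-order condition forces the symbols used to be an initial segment $\{1,\ldots,m\}$ with first occurrences in increasing order, and this canonically labels the blocks of a set partition of $N$, giving $|Q^*_{n,\ell}|=\sum_{m=1}^{\ell}S(n,m)$; the rewriting of $\frac{1}{m!}\binom{m}{i}$ as $\frac{1}{i!\,(m-i)!}$ then yields Formula~(\ref{enum2}) verbatim. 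This is consistent with the bijection to set partitions that the paper itself sketches in the introduction. The only proof the paper does give is for the equivalent count in Theorem~\ref{theorenum}, which proceeds by a genuinely different route: Burnside's lemma applied to the $S_\ell$-action, summing fixed points over permutations grouped by how many symbols they derange. Your approach buys a direct combinatorial meaning for each summand ($S(n,m)$ counts patterns using exactly $m$ symbols), while the paper's orbit-counting approach avoids choosing canonical representatives at the cost of an inclusion--exclusion over derangements. No gaps; the one point worth stating explicitly is that $S(n,m)=0$ for $m>n$, so the upper limit $\ell$ in the sum is harmless when $\ell>n$.
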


Here, we present an alternative enumeration from the perspective of sequence patterns.
We note that a sequence pattern $t\in\T_{n,\ell}$ can be considered as the orbit of a sequence $q\in t$ under the action of the symmetric group $S_\ell$ on individual elements of~$q$.
Thus, we can compute $\abs{\T_{n,\ell}}$ with the orbit-counting theorem, also known as Cauchy-Frobenius lemma or Burnside's lemma~\cite{Gallian2010}. 

\begin{theorem}\label{theorenum}
The number of length-$n$ level-$\ell$ sequence patterns is given by Formula (\ref{enum}).
In particular, we have $\abs{\T_{n,2}} = 2^{n-1}$.

\begin{equation}
\abs{\T_{n,\ell}} = \frac{\ell^n}{\ell!} + \sum_{m=2}^{\ell} \frac{(\ell-m)^n}{(\ell-m)!} \sum_{i=0}^{m} \frac{(-1)^i}{i!}
\label{enum}
\end{equation}
\end{theorem}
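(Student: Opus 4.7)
The plan is to apply the orbit-counting (Burnside/Cauchy-Frobenius) theorem directly, as hinted in the paragraph preceding the statement. Writing the number of orbits as
\[
\abs{\T_{n,\ell}} \;=\; \frac{1}{\ell!} \sum_{\phi \in S_\ell} |\mathrm{Fix}(\phi)|,
\]
the first step I would take is to compute $|\mathrm{Fix}(\phi)|$ for a single permutation $\phi \in S_\ell$. A sequence $q = \seq{x_1;\ldots;x_n}$ is fixed by $\phi$ if and only if $\phi(x_i) = x_i$ for every $i$, i.e.\ each $x_i$ lies in the set of fixed points of $\phi$. So if $f(\phi)$ denotes the number of fixed points of $\phi$, then $|\mathrm{Fix}(\phi)| = f(\phi)^n$. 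This observation is the entire nontrivial content of the proof.

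Next I would group the permutations of $S_\ell$ by their number of fixed points. The number of $\phi \in S_\ell$ with exactly $j$ fixed points equals $\binom{\ell}{j}\, d_{\ell - j}$, where $d_k$ is the number of derangements of $k$ letters; using the classical formula $d_k = k!\sum_{i=0}^{k}(-1)^i/i!$ and substituting into Burnside gives
\[
\abs{\T_{n,\ell}} \;=\; \frac{1}{\ell!}\sum_{j=0}^{\ell} \binom{\ell}{j}\, d_{\ell - j}\, j^n \;=\; \sum_{j=0}^{\ell} \frac{j^n}{j!} \sum_{i=0}^{\ell - j} \frac{(-1)^i}{i!}.
\]
A reindexing $m = \ell - j$ turns this into $\sum_{m=0}^{\ell} \frac{(\ell - m)^n}{(\ell - m)!}\sum_{i=0}^{m}\frac{(-1)^i}{i!}$, which already has the right shape of Formula~(\ref{enum}).

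To match the stated form exactly, I would isolate the $m=0$ term, which is $\ell^n/\ell!$, and then observe that the $m=1$ term vanishes because $\sum_{i=0}^{1}(-1)^i/i! = 0$; this lets me start the outer sum at $m = 2$ as written. For the closing remark $\abs{\T_{n,2}} = 2^{n-1}$, I would simply plug $\ell = 2$ into the formula: the $m=2$ term contains the factor $0^n = 0$ for $n \geq 1$, leaving only $2^n/2! = 2^{n-1}$.

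No step is really an obstacle — the algebra is routine and the derangement identity is standard. The one place to be careful is the identification $|\mathrm{Fix}(\phi)| = f(\phi)^n$, so I would state and justify it explicitly (checking that a single position is fixed under $\phi$ precisely when its symbol is a fixed point of $\phi$, and that the $n$ positions are independent), since everything downstream is bookkeeping from that point on.
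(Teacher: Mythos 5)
Your proposal is correct and follows essentially the same route as the paper: both apply the orbit-counting theorem, observe that a permutation fixes exactly $f(\phi)^n$ sequences where $f(\phi)$ is its number of fixed symbols, and count permutations by grouping them according to how many symbols they derange (your reindexing $m=\ell-j$ is precisely the paper's parameter $m$). The algebra, the derangement count $\binom{\ell}{m}\,m!\sum_{i=0}^{m}(-1)^i/i!$, and the $\ell=2$ specialization all match the paper's argument.
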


\begin{proof}
The orbit-counting theorem states that the number of orbits can be computed with Formula (\ref{oct}), where $\Q_{n,\ell}^{\phi}=\set{q\in \Q_{n,\ell} \, | \, \phi(q) = q}$ is the set of fixed sequences by the permutation $\phi\in S_\ell$~\cite{Gallian2010}. 

\begin{equation}
\abs{\T_{n,\ell}} = \frac{1}{\abs{S_\ell}}\sum_{\phi \in S_\ell}\abs{\Q_{n,\ell}^{\phi}}
\label{oct}
\end{equation}

\noindent
The identity in $S_\ell$ fixes all sequences in $\Q_{n,\ell}$, so the number of sequences fixed by the identity is~$\ell^n$.
Let $S_{\ell|m} \subset S_\ell$ be the subset of permutations that derange $m$ symbols, where $2\leq m\leq \ell$.
The number of permutations in $S_{\ell|m}$ can be computed by counting the number of ways of selecting $m$ symbols from the symbol set $L$ and multiplying by the number of derangements of the $m$ symbols. See Formula (\ref{derangem}).

\begin{equation}
\abs{S_{\ell|m}} = \binom{\ell}{m} \left(m!\sum_{i=0}^{m} \frac{(-1)^i}{i!} \right) = \frac{\ell!}{(\ell-m)!}\sum_{i=0}^{m}\frac{(-1)^i}{i!}
\label{derangem}
\end{equation}

\noindent
We note that a permutation $\phi \in S_{\ell|m}$ fixes all and only sequences without the $m$ symbols that $\phi$ deranges, and there are $(\ell-m)^n$ such sequences in $\Q_{n,\ell}^{\phi}$. 
Therefore, we have Formula (\ref{eq}).

\begin{equation}
\sum_{\phi \in S_\ell}\abs{\Q_{n,\ell}^{\phi}} = \ell^n + \ell! \sum_{m = 2}^{\ell} \frac{(\ell-m)^n}{(\ell-m)!}\sum_{i=0}^{m}\frac{(-1)^i}{i!}
\label{eq}
\end{equation}

\noindent
Then Formula (\ref{enum}) follows applying Formula (\ref{eq}) to Formula (\ref{oct}).
\end{proof}

Recall that when~$n \leq \ell$, there is an one-to-one correspondence between sequence patterns and partitions of set with $n$ elements.
So, we have $\abs{\T_{n,\ell}} = B_n$ for any $n \leq \ell$, where $B_n$ is the $n$-th Bell number.
If we set $n = \ell$, then Formula (\ref{enum}) is equivalent to Dobi\'nski's formula for Bell numbers \cite{Pitman1997}. 
When $n > \ell$, the number $\abs{\T_{n,\ell}}$ equals the number of partitions of an $n$-element set with at most $\ell$ subsets.

\section{Hamming distance of sequence patterns}\label{sec3}

\subsection{Basic definitions}
Let $Q = \set{q_1, q_2, \ldots, q_k} \subset \Q_{n,\ell}$ be a set of sequences with $k \geq 2$. 
Recall that a sequence is a function from the index set $N = \set{1,2,3,\ldots,n}$ to the symbol set $L = \set{1,2,3,\ldots,\ell}$.
We define the {\em Hamming distance} (or simply the {\em distance}) of sequences in $Q$ to be the number of indices in $N$ whose images in $q_1$, $q_2$, \ldots, $q_k$ are not identical. 
We denote the distance of the sequences in $Q$ by $d(Q)$ or $d(q_1, q_2, \ldots, q_k)$. 
For instance, the distance of the three length-$5$ level-$3$ sequences in  Example (\ref{exdiff}) is $d(q_1,q_2,q_3) = 4$, because only the fourth elements in the sequences are identical.

\begin{equation}
q_1 = \begin{bmatrix} 1 \\ 1 \\ 3 \\ 2 \\ 1 \end{bmatrix} \quad q_2 = \begin{bmatrix} 3 \\ 3 \\ 1 \\ 2 \\ 3 \end{bmatrix} \quad q_3 = \begin{bmatrix} 1 \\ 1 \\ 2 \\ 2 \\ 1 \end{bmatrix}
\label{exdiff}
\end{equation}

\noindent
Note that the order of sequences in $Q$ is irrelevant in computing the distance.
So to simplify our arguments, we fix the order of sequences and write $Q = \seq{q_1,q_2,\ldots,q_k}$ as a sequence (of sequences). 
Here, we separate the sequences in $Q$ by colons to indicate that we list the sequences horizontally.
For example, in computing the distance of the three sequences $q_1,q_2$ and $q_3$ in Example (\ref{exdiff}), we write the set $Q$ of the three sequences as in Example (\ref{excross}).

\begin{equation}
Q = [q_1,q_2,q_3] = \begin{bmatrix} 1 & 3 & 1 \\ 1 & 3 & 1 \\ 3 & 1 & 2 \\ 2 & 2 & 2 \\ 1 & 3 & 1 \end{bmatrix}
\label{excross}
\end{equation}

Let $K = \set{1,2,\ldots,k}$ be the set of $k$ positive integers.
We assume $k \geq 2$ throughout the paper unless otherwise stated.
We define a {\em cross section} of $Q = \seq{q_1,q_2,\ldots,q_k}$ to be a sequence $c_i : K \to L$ consisting of the $i$-th elements of the sequences in $Q$, that is $c_i = \seq{q_1(i),q_2(i),\ldots,q_k(i)}$.
We denote the set of length-$k$ level-$l$ cross sections by $\C_{k,\ell}$, to distinguish the horizontally written sequences (cross sections) from the vertically written sequences in $\Q_{k,\ell}$. 
Similarly, we say a cross section is {\em constant} if all of its elements are identical.
Let $s(Q)$ be the number of constant cross sections in $Q$.
We have $d(Q) = n - s(Q)$.
For instance, the sequences $c_1 = [1,3,1]$, $c_2 = [1,3,1]$, $c_3 = [3,1,2]$, $c_4 = [2,2,2]$ and $c_5 = [1,3,1]$ are cross sections of $Q$ displayed in Example (\ref{excross}), and $c_4$ is the only constant cross section in $Q$, so $s(Q) = 1$ and $d(Q)= 4$.

Let $T = \set{t_1, t_2, \ldots, t_k} \subset \T_{n,\ell}$ be a set of sequence patterns.
Recall that a sequence pattern $t$ is an equivalence class of sequences under permutations, and $t$ can be denoted by $\rep{q}$ with a representative $q\in t$.
We define the {\em Hamming distance} (or simply the {\em distance}) of sequence patterns in $T$ by Formula (\ref{diff}) and denote the distance by $d(T)$ or $d(t_1,t_2, \ldots,t_k)$.

\begin{equation}
d(t_1,t_2,\ldots,t_k) = \min_{q_i\in t_i}d(q_1,q_2,\ldots,q_k)
\label{diff}
\end{equation}

\noindent
Suppose $t_1$, $t_2$, \ldots, $t_k$ are respectively generated by sequences $q_1$, $q_2$, \ldots, $q_k$ in $\Q_{n,\ell}$, and we write the set $Q = \seq{q_1,q_2,\ldots,q_k}$ as a sequence. 
We can analogously write the set of sequence patterns as $T = \rep{Q} = \seq{\rep{q_1},\rep{q_2},\ldots,\rep{q_k}} = \seq{t_1,t_2,\ldots,t_k}$. 
The distance of $T = \seq{t_1,t_2,\ldots,t_k}$ can also be defined using permutations. 
Let $\Phi = \seq{\phi_1,\phi_2,\ldots,\phi_k} \in S_\ell^k$ be a sequence of permutations, where $S_\ell^k$ is the Cartesian product of $k$ symmetric groups of order $\ell$.
We define $\Phi(Q) = \seq{\phi_1(q_1),\phi_2(q_2),\ldots,\phi_k(q_k)}$ and the distance for a set $T = \rep{Q}$ of sequence patterns by Formula~(\ref{diffq}).

\begin{equation}
d(\rep{Q}) = \min_{\Phi\in S_\ell^k} d(\Phi(Q))
\label{diffq}
\end{equation}

\noindent
Note that the distance of $Q$ can be computed by counting the number of constant cross sections in $Q$. So the distance of $T = \rep{Q}$ can also be computed by Formula~(\ref{diffx}).

\begin{equation}
d(\rep{Q}) = n - \max_{\Phi\in S_\ell^k} s(\Phi(Q))
\label{diffx}
\end{equation}

\noindent
For instance, the sequence of permutations $\Phi = \seq{(1),(13),(1)}$ maps $Q$ in Example (\ref{excross}) to $\Phi(Q)$ in Example (\ref{excons}), and there are 4 constant cross sections in $\Phi(Q)$. 
It is easy to check that the maximal number of constant cross sections in $\Phi(Q)$ is 4 for any $\Phi\in S_{\ell}^k$. 
So the distance for the set $\rep{Q}$ of sequence patterns is~$1$. 

\begin{equation}
\Phi(Q) = \begin{bmatrix} 1 & 1 & 1 \\ 1 & 1 & 1 \\ 3 & 3 & 2 \\ 2 & 2 & 2 \\ 1 & 1 & 1 \end{bmatrix}
\label{excons}
\end{equation}

\subsection{Metric spaces of sequence patterns}

It is well known that the Hamming distance between two sequences in $\Q_{n,\ell}$ is a metric. 
Namely, the Hamming distance has the following three properties.
\begin{enumerate}
\item Identity: For any sequences $q,p\in \Q_{n,\ell}$, we have $d(q,p) = 0$ if and only if~$q = p$. 
\item Symmetry: For any sequences $q,p\in \Q_{n,\ell}$, we have $d(q,p) = d(p,q)$;
\item Triangle inequality: For any sequences $q_1,q_2,q_3\in \Q_{n,\ell}$, the inequality $d(q_1,q_3) + d(q_2,q_3) \geq d(q_1,q_2)$ holds.
\end{enumerate}
It is trivial to check that the generalized Hamming distance of two sequence patterns in $\T_{n,\ell}$ satisfies the first two properties.
Here, we show that the triangle inequality holds.
Consider three sequence patterns $t_1 = \rep{q_1}, t_2 = \rep{q_2}$ and $t_3 = \rep{q_3}$ in~$\T_{n,\ell}$.
Suppose that $d(t_1,t_3) = d(\phi_1(q_1),q_3)$ for a permutation $\phi_1\in S_\ell$ and that $d(t_2,t_3) = d(\phi_2(q_2),q_3)$ for a permutation~$\phi_2\in S_\ell$.
We have $d(t_1,t_3) + d(t_2,t_3) = d(\phi_1(q_1),q_3) + d(\phi_2(q_2),q_3) \geq d(\phi_1(q_1),\phi_2(q_2))$. 
Note that $d(t_1,t_2)$ is defined to be the minimal distance of $d(\phi_a(q_1),\phi_b(q_2))$ over all $\phi_a,\phi_b\in S_\ell$. 
We have $d(\phi_1(q_1),\phi_2(q_2)) \geq d(t_1,t_2)$ and $d(t_1,t_3) + d(t_2,t_3) \geq d(t_1,t_2)$.
Therefore, we have the following theorem.

\begin{theorem}\label{metric}
The generalized Hamming distance defined for a pair of sequences patterns in $\T_{n,\ell}$ is a metric, and all sequence patterns in $\T_{n,\ell}$ together with the generalized Hamming distance for two sequence patterns form a metric space.
\end{theorem}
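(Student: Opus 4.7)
The plan is to verify the three defining properties of a metric for $d$ on $\T_{n,\ell}$ — identity of indiscernibles, symmetry, and the triangle inequality — with non-negativity being automatic since $d$ counts indices and is defined as a minimum of non-negative quantities. I expect identity and symmetry to be short; the only real content will lie in the triangle inequality.

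For identity I would argue as follows. If $t_1 = t_2$, then picking any common representative $q \in t_1 = t_2$ gives $d(q,q) = 0$, and the minimum in (\ref{diff}) is at most $0$. Conversely, if $d(t_1,t_2) = 0$ then some pair of representatives $q_1 \in t_1, q_2 \in t_2$ realizes $d(q_1,q_2) = 0$; since the Hamming distance on $\Q_{n,\ell}$ is a genuine metric, this forces $q_1 = q_2$, whence $t_1$ and $t_2$ share a representative and coincide as equivalence classes. Symmetry follows from $d(q_1,q_2) = d(q_2,q_1)$ on $\Q_{n,\ell}$ together with the fact that the minimum in (\ref{diff}) is symmetric in its two arguments.

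The key step is the triangle inequality $d(t_1,t_2) \leq d(t_1,t_3) + d(t_2,t_3)$. My plan is to fix any representative $q_3 \in t_3$ and then select representatives $q_1 \in t_1$ and $q_2 \in t_2$ (equivalently, permutations $\phi_1,\phi_2 \in S_\ell$ applied to initial representatives) that attain the two minima $d(t_1,t_3) = d(q_1,q_3)$ and $d(t_2,t_3) = d(q_2,q_3)$. Applying the sequence-level triangle inequality to the three concrete sequences $q_1, q_2, q_3 \in \Q_{n,\ell}$ yields $d(q_1,q_3) + d(q_2,q_3) \geq d(q_1,q_2)$, and the definition of $d(t_1,t_2)$ as an infimum over all pairs of representatives gives $d(q_1,q_2) \geq d(t_1,t_2)$. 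Chaining these two inequalities closes the argument.

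The one point requiring a little care — and the place I expect to slow down — is the reduction to a common representative of $t_3$. A priori, the minimum defining $d(t_1,t_3)$ and the minimum defining $d(t_2,t_3)$ could be witnessed by different permutations on the $t_3$ side, which would leave us comparing $q_1$ against $q_3$ and $q_2$ against a differently-permuted $q_3'$, with no immediate triangle inequality available. This is resolved by noting that each distance depends only on the \emph{relative} action of $S_\ell$ on the two orbits, so any permutation acting on $t_3$ can be absorbed into the choices made on the $t_1$ and $t_2$ sides; equivalently, one may fix $q_3$ once and for all and let the minimization on the other two arguments compensate. Once that reduction is made explicit, the chain of inequalities above goes through, and the conclusion that $(\T_{n,\ell}, d)$ is a metric space is immediate.
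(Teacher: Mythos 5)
Your proposal is correct and follows essentially the same route as the paper: the identity and symmetry properties are checked directly, and the triangle inequality is obtained by fixing a single representative $q_3$ of $t_3$, choosing permuted representatives of $t_1$ and $t_2$ attaining the two minima, and chaining the sequence-level triangle inequality with the minimality of $d(t_1,t_2)$. Your explicit justification that the minimization can always be witnessed with $q_3$ held fixed (by absorbing any permutation of $q_3$ into the other argument, since the Hamming distance is invariant under applying one permutation to both sequences) is a point the paper's proof uses implicitly without comment, so spelling it out is a welcome addition rather than a deviation.
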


\subsection{Bounds of Hamming distance}
It is clear that the minimal distance for a set of sequence patterns is~$0$. 
The {\em maximal distance} of $k$ sequence patterns in $\T_{n,\ell}$, denoted by $D_{n,\ell,k}$, is defined by Formula~(\ref{maxdiff}).

\begin{equation}
D_{n,\ell,k} = \max_{t_i\in \T_{n,\ell}}d(t_1,t_2,\ldots,t_k)
\label{maxdiff}
\end{equation}

For any set $Q = \seq{q_1, q_2, \ldots, q_k}$ of sequences in $\Q_{n,\ell}$, we can always apply a sequence of permutations $\Phi\in S_\ell^k$ to $Q$ such that every sequence in $\Phi(Q)$ has the same first element. 
Hence, the maximal distance $D_{n,\ell,k}$ has a trivial upper bound.

\begin{lemma}\label{upb}
For any positive integers $n$, $\ell$ and $k\geq 2$, we have $D_{n,\ell,k} \leq n-1$.
\end{lemma}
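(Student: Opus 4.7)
The plan is to exploit the hint already given right before the lemma: for any set of sequences, we can always align the first coordinate by applying appropriate permutations, which forces at least one constant cross section, hence at most $n-1$ non-constant ones.

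More precisely, I would take an arbitrary set $T = \set{t_1, t_2, \ldots, t_k} \subset \T_{n,\ell}$ of sequence patterns and, for each $i$, pick a representative $q_i \in t_i$, forming $Q = \seq{q_1, q_2, \ldots, q_k}$. Because each $\phi_i$ ranges over the full symmetric group $S_\ell$, I can choose, for every $i \in K$, a permutation $\phi_i \in S_\ell$ that sends the first element $q_i(1)$ to the fixed symbol $1 \in L$. Setting $\Phi = \seq{\phi_1, \phi_2, \ldots, \phi_k}$, the first cross section of $\Phi(Q)$ consists entirely of $1$'s and is therefore constant, so $s(\Phi(Q)) \geq 1$.

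Applying Formula~(\ref{diffx}), this yields
\begin{equation*}
d(\rep{Q}) \;=\; n - \max_{\Psi \in S_\ell^k} s(\Psi(Q)) \;\leq\; n - s(\Phi(Q)) \;\leq\; n-1.
\end{equation*}
Since $\rep{Q} = T$ and $T$ was chosen arbitrarily, taking the maximum over $T \subset \T_{n,\ell}$ in Formula~(\ref{maxdiff}) gives $D_{n,\ell,k} \leq n-1$, which is the claim.

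There is essentially no obstacle here: the argument is almost a one-liner once one observes that the action of $S_\ell^k$ is rich enough to independently relabel the symbol at any single fixed index to a common value. The only thing to be careful about is that we really do apply different permutations to different sequences (which is allowed, since $\Phi \in S_\ell^k$ rather than $S_\ell$), so the argument does not accidentally presuppose anything about the equivalence relation within a single sequence pattern.
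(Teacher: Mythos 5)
Your argument is correct and is exactly the paper's own reasoning: the paper justifies this lemma in the sentence immediately preceding it by noting that a sequence of permutations in $S_\ell^k$ can always align the first elements of all $k$ sequences, forcing at least one constant cross section. Your write-up simply makes that one-liner explicit via Formula~(\ref{diffx}), so there is nothing further to add.
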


We characterize the sets of $k$ sequence patterns in $\T_{n,\ell}$ that have distance~$n-1$.
Let $c = \seq{a_1,a_2,\ldots,a_k}$ and $c' = \seq{b_1,b_2,\ldots,b_k}$ be two cross sections in $\C_{k,\ell}$.
We say that $c$ and $c'$ are {\em identical} if $a_i=b_i$ for all $1 \leq i \leq k$.
We say that $c$ and $c'$ are {\em incompatible} if $a_i \neq b_i$ for all $1 \leq i \leq k$. 
We say that $c$ and $c'$ are {\em connected} (by a sequence of permutations) if they are either identical or incompatible. 

\begin{lemma}\label{mcs}
Let $T = \rep{Q} = \seq{\rep{q_1},\rep{q_2},\ldots,\rep{q_k}}$ be a set of $k$ sequence patterns in $\T_{n,\ell}$.
Then $d(T)= n-1$ if and only if $Q$ contains no pair of connected cross sections.
\end{lemma}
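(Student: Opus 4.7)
The plan is to build on Formula~(\ref{diffx}), which gives $d(T) = n - \max_{\Phi \in S_\ell^k} s(\Phi(Q))$, together with Lemma~\ref{upb}, which shows this maximum is always at least~$1$ (one can flatten any chosen cross section by picking each $\phi_j$ to send its $j$-th coordinate to~$1$). Under these reductions, proving $d(T) = n-1$ is equivalent to proving $\max_{\Phi} s(\Phi(Q)) \leq 1$, which in turn is equivalent to saying no single $\Phi \in S_\ell^k$ renders two different cross sections of $Q$ simultaneously constant. The whole lemma therefore reduces to the following pairwise characterization: two cross sections $c$ and $c'$ of $Q$ can be simultaneously made constant by some $\Phi$ if and only if they are connected.

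For this pairwise claim, write $c = \seq{a_1,\ldots,a_k}$ and $c' = \seq{b_1,\ldots,b_k}$. If $c$ and $c'$ are identical, any $\Phi$ that makes $c$ constant automatically makes $c'$ constant. If $c$ and $c'$ are incompatible, for each $j$ I choose $\phi_j \in S_\ell$ with $\phi_j(a_j) = 1$ and $\phi_j(b_j) = 2$; such a permutation exists precisely because $a_j \neq b_j$, and the resulting $\Phi$ flattens both cross sections. Conversely, assume $\Phi(c) = \seq{v,\ldots,v}$ and $\Phi(c') = \seq{w,\ldots,w}$ for some $v, w \in L$, so that $\phi_j(a_j) = v$ and $\phi_j(b_j) = w$ for every $j \in K$. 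Since each $\phi_j$ is a bijection, one has $a_j = b_j$ exactly when $v = w$; as $v$ and $w$ do not depend on $j$, either $a_j = b_j$ for every $j$ (identical) or $a_j \neq b_j$ for every $j$ (incompatible). This pins down the only way two cross sections can be jointly flattened.

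Combining these pieces, $\max_\Phi s(\Phi(Q)) \geq 2$ holds exactly when $Q$ contains a pair of connected cross sections (the other cross sections are free to do as they please), and the contrapositive together with Lemma~\ref{upb} delivers the stated equivalence. The main obstacle is the rigidity argument in the reverse direction of the pairwise characterization, but as sketched it reduces to the injectivity of permutations combined with the observation that the target constants $v$ and $w$ are position-independent, so each index $j$ is forced into a uniform regime.
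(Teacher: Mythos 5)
Your proposal is correct and follows essentially the same route as the paper's proof: explicit permutations flattening a connected pair (the paper sends $a_j\mapsto a_1$, $b_j\mapsto b_1$ where you send $a_j\mapsto 1$, $b_j\mapsto 2$), and the same injectivity argument showing that a jointly flattened pair must be identical or incompatible. Your framing via the clean pairwise characterization and Formula~(\ref{diffx}) is a slightly tidier packaging of the same ideas, not a different argument.
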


\begin{proof}
If there are two identical cross sections $c = c' = \seq{a_1,a_2,\ldots,a_k}$ in $Q$, then the sequence of permutations $\Phi^* = \seq{(a_1),(a_1a_2),\ldots,(a_1a_k)}$ creates two constant cross sections in $\Phi^*(Q)$. 
So, we have $d(T) < n-1$, contradicting the assumption. 
If there are two incompatible cross sections $c = \seq{a_1,a_2,\ldots,a_k}$ and $c' = \seq{b_1,b_2,\ldots,b_k}$ in~$Q$, then the sequence of permutations $\Phi^* = \seq{\phi^*_1,\phi^*_2,\ldots,\phi^*_k}$ creates two constant cross sections in $\Phi^*(Q)$, where $\phi^*_i$ is given by the two-line notation in Formula (\ref{sperm}).

\begin{equation}
\phi^*_i = \begin{pmatrix}
 a_i & b_i & \cdots  \\
 a_1 & b_1 & \cdots
\end{pmatrix}
\label{sperm}
\end{equation}

\noindent
Note that $c$ and $c'$ are incompatible, so we have $a_1 \neq b_1$ and $a_i \neq b_i$, which guarantee that the sequence of permutations $\Phi^*$ is well defined. 
Similarly, two constant cross sections in $\Phi(Q)$ imply $d(T) < n-1$, which contradicts the assumption. 

Conversely, assume $d(T) < n-1$, then there exists a pair of cross sections $c = \seq{a_1,a_2,\ldots,a_k}$ and $c' = \seq{b_1,b_2,\ldots,b_k}$ in $Q$ such that $\Phi(c)$ and $\Phi(c')$ are constant in $\Phi(Q)$ for a sequence of permutations $\Phi = \seq{\phi_1,\phi_2,\ldots,\phi_k} \in S_\ell^k$.
We claim that $c$ and $c'$ are either identical or incompatible. 
If they are not identical or incompatible, then there exists an index $i\in N$ such that $a_i = b_i$, and there also exists a different index $j \in N$ such that $a_j \neq b_j$. 
Since the cross sections $\Phi(c)$ and $\Phi(c')$ are constant in~$\Phi(Q)$, we have $\phi_i(a_i) = \phi_j(a_j)$ and $\phi_i(b_i) = \phi_j(b_j)$.
Furthermore, because $a_i = b_i$, we have $\phi_i(a_i) = \phi_i(b_i)$.
These imply that $\phi_j(a_j) = \phi_j(b_j)$, which contradicts $a_j \neq b_j$.
\end{proof}

The proof of Lemma~\ref{mcs} can be generalized for a set of pairwise connected cross sections.

\begin{lemma}\label{cn}
Let $Q = \seq{q_1,q_2,\ldots,q_k}$ be a set of $k$ sequences in $\Q_{n,\ell}$.
If $Q$ contains $m$ pairwise connected cross sections, then there exists a sequence of permutations $\Phi^*\in S_\ell^k$ such that the $m$ pairwise connected cross sections are mapped to $m$ constant cross sections by $\Phi^*$.
\end{lemma}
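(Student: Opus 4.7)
The plan is to extend the explicit construction in the proof of Lemma~\ref{mcs} so that it treats all $m$ pairwise connected cross sections simultaneously rather than just a pair. Label the $m$ pairwise connected cross sections in $Q$ as $c_1, c_2, \ldots, c_m$ with $c_r = \seq{a_{r,1}, a_{r,2}, \ldots, a_{r,k}}$ for $r \in \set{1, 2, \ldots, m}$. The goal is to produce $\Phi^* = \seq{\phi_1^*, \phi_2^*, \ldots, \phi_k^*} \in S_\ell^k$ such that $\phi_j^*(a_{r,j})$ is independent of $j$ for each fixed $r$; this makes $\Phi^*(c_r)$ a constant cross section for every $r$.

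To construct $\Phi^*$, fix a target symbol $v_r \in L$ for each cross section $c_r$; the simplest choice is to let $\phi_1^*$ be the identity permutation and set $v_r = a_{r,1}$. Then, for each $j \in \set{2,3,\ldots,k}$, specify the action of $\phi_j^*$ on the finite set $A_j = \set{a_{r,j} : r \in \set{1,\ldots,m}}$ by the rule $a_{r,j} \mapsto v_r$, and finally extend this partial map arbitrarily to a full permutation $\phi_j^* \in S_\ell$.

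The step that requires care — and which is the only substantive obstacle — is verifying that the partial map defined on $A_j$ is both well defined and injective, since this is what allows the extension to a permutation of $L$ to exist. Well-definedness demands that whenever $a_{r,j} = a_{r',j}$ for distinct $r,r'$, one also has $v_r = v_{r'}$; injectivity demands the converse. Both follow directly from pairwise connectedness: if $c_r$ and $c_{r'}$ agree in coordinate $j$, they cannot be incompatible, so they are identical and in particular agree in coordinate $1$, giving $v_r = v_{r'}$; if they disagree in coordinate $j$, they cannot be identical, so they are incompatible and in particular disagree in coordinate $1$, giving $v_r \neq v_{r'}$. Once this is established, the partial injection from $A_j$ to an equally sized subset of $L$ automatically extends to a bijection of $L$, yielding the desired $\phi_j^* \in S_\ell$, and by construction $\Phi^*$ maps every $c_r$ to the constant cross section of value $v_r$.
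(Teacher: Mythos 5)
Your proposal is correct and follows essentially the same construction as the paper: each $\phi_j^*$ sends the $j$-th coordinate of every connected cross section to that cross section's first coordinate, with pairwise connectedness guaranteeing the partial map is well defined and injective. The paper phrases this by first discarding duplicate cross sections so that the survivors are pairwise incompatible, whereas you verify well-definedness and injectivity directly, but the underlying argument is the same.
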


\begin{proof}
For any $1\leq i\leq m$, let $c^i = \seq{a^i_1,a^i_2,\ldots,a^i_k}$ be one of the $m$ pairwise connected cross sections in~$Q$. 
Since connected cross sections can be identical, we assume that there are $w\leq m$ unique cross sections among them. 
Without loss of generality, let $c^1,c^2,\ldots,c^w$ be the unique cross sections.
Because they are pairwise incompatible, the elements $a^1_j$,$a^2_j$,\ldots$a^w_j$ are $w$ distinct symbols for any $1\leq j\leq k$. We define a sequence of permutations $\Phi^* = \seq{\phi^*_1,\phi^*_2,\ldots,\phi^*_k} \in S_\ell^k$ by the two-line notation in Formula~(\ref{tlperm})

\begin{equation}
\phi^*_j = \begin{pmatrix}
a^1_j & a^2_j & \dots& a^{w-1}_j & a^w_j & \cdots\\
a^1_1 & a^2_1 & \dots& a^{w-1}_1 & a^w_1 & \cdots
\end{pmatrix}
\label{tlperm}
\end{equation}

\noindent
Note that $c^1,c^2,\ldots,c^w$ being pairwise incompatible implies that the sequence of permutations $\Phi^*$ is well defined.
It is trivial to check that the $m$ connected cross sections are mapped to $m$ constant cross sections by $\Phi^*$.
\end{proof}

We examine the connectedness of cross sections in $\C_{k,\ell}$.
There are in total $\ell^k$ cross sections in~$\C_{k,\ell}$. 
We divide $\C_{k,\ell}$ into $\ell$ subsets based on their first elements, and we denote the subset of cross sections with first element $i$ by $\C_{k,\ell|i}$.
Let $c$ be a cross section in $\C_{k,\ell|i}$ and $c'$ be a cross section in~$\C_{k,\ell|i+1}$.
We say that $c$ is {\em linked} to $c'$ if $c' = \psi(c)$, where $\psi = (12\ldots\ell) \in S_\ell$. 
Let $c$ be a cross section in~$\C_{k,\ell|1}$.
We define a {\em link} $\K(c)$ generated by $c$ to be a subset of $\C_{k,\ell}$ such that $\K(c) = \set{c,\psi(c),\psi^2(c),\ldots,\psi^{\ell-1}(c)}$.
For instance, we display a link of $\C_{5,3}$ in Example (\ref{exlink}).

\begin{equation}
c = [1,2,3,1,1] \quad \psi(c) = [2,3,1,2,2] \quad \psi^2(c) = [3,1,2,3,3]
\label{exlink}
\end{equation}

\noindent
It is clear that for any cross section $c\in \C_{k,\ell|1}$, every subset $\C_{k,\ell|i}$ has one and only one element in~$\K(c)$. 
It is also trivial that for different cross sections $c,c'\in \C_{k,\ell|1}$, we have $\K(c)\cap \K(c') = \emptyset$.
So the links partition $\C_{k,\ell}$, and there are $\ell^{k-1}$ links in~$\C_{k,\ell}$.
Moreover, since $\psi$ increases every symbol by $1$ in $\mathbb{Z}_\ell$ (where we set $0 = \ell$), every pair of cross sections in a link is incompatible, hence connected. 

We say that a set $Q$ of $k$ sequences in $\Q_{n,\ell}$ is {\em complete} if $\Phi(Q)$ contains one and only one constant cross section for any sequence of permutations~$\Phi\in S_\ell^k$.
We say that $Q$ is {\em semi-complete} if $\Phi(Q)$ contains at most one constant cross section for any sequence of permutations~$\Phi\in S_\ell^k$.
Note that there exist totally $\ell^{k-1}$ cross sections in $\C_{k,\ell|1}$. 
We list these cross sections in lexicographic order of the elements in them and construct a set $M = \seq{q^*_1,q^*_2,\ldots,q^*_k}$ of sequences in $\Q_{\ell^{k-1},\ell}$ such that the cross sections in $M$ are the cross sections from $\C_{k,\ell|1}$ in lexicographic order. See Formula (\ref{mm}).
For a positive integer $r<\ell^{k-1}$, we define $M_r$ to be the set of $k$ sequences in $\Q_{r,\ell}$ that contains only the first $r$ cross sections in $M$.

\begin{equation}
M = \seq{q^*_1,q^*_2,\ldots,q^*_k} = \begin{bmatrix}
1 & 1 & \ldots & 1 & 1\\
1 & 1 & \ldots & 1 & 2\\
 \vdots &  \vdots & \ddots &  \vdots & \vdots\\
1 & 1 & \ldots & 1 & \ell\\
1 & 1 & \ldots & 2 & 1\\
 \vdots &  \vdots & \ddots &  \vdots & \vdots\\
 1 & \ell & \ldots & \ell & \ell\\
\end{bmatrix}
\label{mm}
\end{equation}

\begin{lemma}\label{comp}
The set $M$ of $k$ sequences in $\Q_{\ell^{k-1},\ell}$ is complete, and the set $M_r$ of $k$ sequences in $\Q_{r,\ell}$ is semi-complete.
\end{lemma}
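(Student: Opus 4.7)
The plan is to reduce the completeness property to a simple counting fact about symmetric group actions. Fix $\Phi = \seq{\phi_1,\phi_2,\ldots,\phi_k}\in S_\ell^k$ and let $c^i = \seq{c^i_1,c^i_2,\ldots,c^i_k}$ denote the $i$-th cross section of $M$. By construction of $M$, we have $c^i_1 = 1$ for every $i$, and the tuples $(c^i_2,c^i_3,\ldots,c^i_k)$ range, as $i$ varies from $1$ to $\ell^{k-1}$, exactly once over all elements of $L^{k-1}$.

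The cross section of $\Phi(M)$ at position $i$ is constant if and only if $\phi_1(c^i_1) = \phi_2(c^i_2) = \cdots = \phi_k(c^i_k)$. Since $c^i_1 = 1$ is fixed, this common value must equal $\phi_1(1)$, so constancy is equivalent to the system of equations $c^i_j = \phi_j^{-1}(\phi_1(1))$ for $j = 2,3,\ldots,k$. The right-hand sides are determined entirely by $\Phi$, so they specify a unique tuple $(c_2^*,c_3^*,\ldots,c_k^*) \in L^{k-1}$. Thus I would argue that exactly one index $i \in \set{1,2,\ldots,\ell^{k-1}}$ satisfies the constancy condition, which shows $\Phi(M)$ contains one and only one constant cross section and establishes that $M$ is complete.

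For the second assertion, observe that $M_r$ consists of the first $r$ cross sections of $M$, so the unique index $i$ identified above belongs to $\set{1,2,\ldots,r}$ or does not, depending on $\Phi$. In either case, $\Phi(M_r)$ contains at most one constant cross section, which is precisely the semi-complete condition.

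I do not expect a real obstacle here; the statement essentially amounts to unpacking the definitions. The only subtle step is recognizing that the permutations $\phi_j$ being bijections converts the joint constancy condition into a unique prescription of the tuple $(c^i_2,\ldots,c^i_k)$, and this is exactly the tuple that appears somewhere in $M$ by the exhaustive lexicographic construction.
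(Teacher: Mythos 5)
Your proof is correct, but it is organized differently from the paper's. The paper splits the claim into two separate arguments: semi-completeness is obtained by observing that $q^*_1$ is constant and no two cross sections of $M$ (or $M_r$) are identical, so no pair of cross sections is connected, and then invoking Lemmas~\ref{upb} and~\ref{mcs} to conclude $d(\rep{M}) = \ell^{k-1}-1$, i.e.\ at most one constant cross section for any $\Phi$; existence of a constant cross section is then argued by noting that the last $k-1$ sequences $M' = \seq{q^*_2,\ldots,q^*_k}$ realize every tuple in $L^{k-1}$, so $\Phi'(M')$ always contains the constant cross section with value $\phi_1(1)$. You instead solve the constancy condition directly: since $c^i_1 = 1$, a cross section of $\Phi(M)$ is constant exactly when $(c^i_2,\ldots,c^i_k) = \bigl(\phi_2^{-1}(\phi_1(1)),\ldots,\phi_k^{-1}(\phi_1(1))\bigr)$, and this prescribed tuple occurs exactly once among the rows of $M$ and at most once among the first $r$ rows. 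This single computation yields both the ``at least one'' and ``at most one'' halves simultaneously, and it is self-contained --- it does not rely on the connectedness machinery of Lemmas~\ref{mcs} and~\ref{cn}. What the paper's route buys is consistency with the framework used elsewhere (connected cross sections and Lemma~\ref{xiq}); what your route buys is brevity and independence from those lemmas. Both are valid.
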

\begin{proof}
Because $q^*_1$ is a constant sequence and there is no identical cross sections in $M$ or $M_r$, no pair of cross sections in $M$ or $M_r$ is connected.
By Lemma~\ref{upb} and Lemma~\ref{mcs}, we have $d(\rep{M}) = \ell^{k-1} - 1$, and $d(\rep{M_r}) = r - 1$. 
So $M$ and $M_r$ are semi-complete.

Let $M' = \seq{q^*_2,q^*_3,\ldots,q^*_k}$ be a set of $k-1$ sequences in~$\Q_{\ell^{k-1},\ell}$. 
The cross sections in $M'$ contain all combinations of assigning $\ell$ symbols to $k-1$ positions.
So there are $\ell$ unique constant cross sections in $M'$, namely $\seq{1,1,\ldots,1}$, $\seq{2,2,\ldots,2}$,\ldots,$\seq{\ell,\ell,\ldots,\ell}$.
For any sequence of permutations $\Phi' = \seq{\phi_2,\phi_3,\ldots,\phi_k}\in S_\ell^{k-1}$, these constant cross sections persist in~$\Phi'(M')$. 
So for any sequence of permutations~$\Phi = \seq{\phi_1,\phi_2,\ldots,\phi_k}$ in~$S_\ell^k$, no matter what $\phi_1(1)\in L$ in $\Phi(M)$ is, there always exists a constant cross section in $\Phi(M)$.
Therefore, $M$ is complete.
\end{proof}

\begin{theorem}\label{nlk}
The maximal distance of $k$ sequence patterns of length $n$ and level $\ell$ is given by Formula~(\ref{mdk}).

\begin{equation}
D_{n,\ell,k} = n - \ceil{\frac{n}{\ell^{k-1}}}
\label{mdk}
\end{equation}
\end{theorem}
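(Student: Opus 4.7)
The plan is to prove $D_{n,\ell,k} = n - \ceil{n/\ell^{k-1}}$ by establishing matching upper and lower bounds. The upper bound will come from a pigeonhole argument on the link partition of $\C_{k,\ell}$ combined with Lemma~\ref{cn}, while the lower bound will be witnessed by an explicit set of $k$ sequence patterns built from the complete set $M$ and the semi-complete set $M_r$ of Lemma~\ref{comp}.

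For the upper bound, I would take an arbitrary set $T = \rep{Q}$ of $k$ sequence patterns in $\T_{n,\ell}$ with representatives $Q = \seq{q_1,\ldots,q_k}$, and view the $n$ cross sections of $Q$ as a multiset distributed among the $\ell^{k-1}$ links that partition $\C_{k,\ell}$. By pigeonhole, some link must contain at least $\ceil{n/\ell^{k-1}}$ of these cross sections (counted with multiplicity). Any two cross sections lying in the same link are either equal or differ by a nontrivial power of the cycle $\psi=(12\cdots\ell)$ and therefore either identical or incompatible, i.e.\ connected in the sense introduced before Lemma~\ref{mcs}. Lemma~\ref{cn} then supplies a sequence of permutations $\Phi^*\in S_\ell^k$ that simultaneously turns all of them into constant cross sections of $\Phi^*(Q)$, giving $\max_{\Phi}s(\Phi(Q))\geq\ceil{n/\ell^{k-1}}$ and hence $d(T)\leq n-\ceil{n/\ell^{k-1}}$.

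For the lower bound I would write $n = a\,\ell^{k-1} + r$ with $0\leq r<\ell^{k-1}$ and construct $Q^{*}\in\Q_{n,\ell}^{k}$ by vertically stacking $a$ copies of $M$ and then appending $M_r$ (omitting the last block when $r=0$). Since each $\phi_i$ in any $\Phi\in S_\ell^k$ is applied to the whole $i$-th sequence, the induced action on $Q^{*}$ splits as the simultaneous action on every stacked block. Completeness of $M$ (Lemma~\ref{comp}) then forces each of the $a$ copies of $M$ to contribute \emph{exactly} one constant cross section in $\Phi(Q^{*})$ for every choice of $\Phi$, while semi-completeness of $M_r$ forces the trailing block to contribute at most one. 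Hence $\max_\Phi s(\Phi(Q^{*}))\leq a + \mathbf{1}[r>0]=\ceil{n/\ell^{k-1}}$ and $d(\rep{Q^{*}})\geq n-\ceil{n/\ell^{k-1}}$, matching the upper bound.

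The main obstacle will be controlling the interplay between the global action of $\Phi$ and the local block structure of $Q^{*}$ in the lower bound: because the same $\phi_i$ is applied across all stacked blocks of the $i$-th sequence, there is no a priori reason constant cross sections produced in different blocks should not conspire to exceed the target count. The strong form of Lemma~\ref{comp} — that $M$ is \emph{complete}, producing exactly one constant cross section under every $\Phi$, rather than merely semi-complete — is precisely what rules this out and lets the block decomposition yield the sharp bound.
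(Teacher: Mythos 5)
Your proposal is correct and follows essentially the same route as the paper: the same pigeonhole argument over the $\ell^{k-1}$ links combined with Lemma~\ref{cn} for the upper bound, and the same stacked construction $[M;M;\ldots;M;M_r]$ with Lemma~\ref{comp} for the matching lower bound. Your explicit handling of the $r=0$ case (omitting the trailing block and writing the count as $a+\mathbf{1}[r>0]$) is in fact slightly more careful than the paper's own statement, which writes $m+1$ without remarking on that degenerate case.
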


\begin{proof}
For any set $Q$ of $k$ sequences in $\Q_{n,\ell}$, there exists a link $\K(c)$ generated by a cross section $c\in \C_{k,\ell|1}$ such that $Q$ has at least $\ceil{n/\ell^{k-1}}$ cross sections of $\K(c)$ due to the pigeonhole principle. 
These cross sections in $\K(c)$ are connected, so they are constant cross sections in $\Phi(Q)$ for a sequence of permutations $\Phi\in S_\ell$ by Lemma~\ref{cn}.
Therefore, we have $s(\Phi(Q)) \geq \ceil{n/\ell^{k-1}}$ and $D_{n,\ell,k} \leq n - \ceil{n/\ell^{k-1}}$.

Suppose that $n = m\ell^{k-1}+r$ for integers $m \geq 0$ and $0\leq r<\ell^{k-1}$. 
Consider the set $M_n = [M;M;\ldots;M;M_r]$ of $k$ sequences in $\Q_{n,\ell}$ constructed by vertically concatenating $m$ copies of $M$ and one copy of $M_r$. 
By Lemma~\ref{comp}, the set $M$ is complete and the set $M_r$ is semi-complete.
So, there are at most $m+1$ constant cross sections in $\Phi(M_n)$ for any sequence of permutations~$\Phi\in S_e^k$.
Therefore, we have $d(\rep{M_n}) = n - (m + 1) = n - \ceil{n/\ell^{k-1}}$ and $D_{n,\ell,k} = n - \ceil{n/\ell^{k-1}}$.
\end{proof}

\subsection{Computing exact Hamming distance}

Let $T = \rep{Q} = \seq{\rep{q_1},\rep{q_2},\ldots,\rep{q_k}}$ be a set of sequence patterns in $\T_{n,\ell}$ and $C(Q) = \seq{c_1;c_2;\ldots;c_n}$ be the sequence of all cross sections in~$Q$. 
We say that a subset $X(Q) \subset C(Q)$ is {\em maximally connected} if all the cross sections in $X(Q)$ are pairwise connected, and any cross section in $C(Q)-X(Q)$ is connected to some cross section in~$X(Q)$.
We use $\X_i(Q)$ to denote the set of all maximally connected subsets of $C(Q)$ that contain $c_i$, and we use $v(\X_i(Q))$ to denote the maximum cardinality of elements in~$\X_i(Q)$.
Lemma~\ref{mcs} implies that two cross sections that are not connected can not both be mapped to constant cross sections by any sequence of permutations~$\Phi\in S_\ell^k$.
So, the minimum number of cross sections in $C(Q)-X(Q)$ over all maximally connected subset $X(Q) \subset C(Q)$ gives the exact distance $d(\rep{Q})$, and we have the following Lemma~\ref{xiq}, where Formula~(\ref{eqxiq}) can be used to compute the exact distance.
\begin{lemma}\label{xiq}
Let $Q = \seq{q_1,q_2,\ldots,q_k}$ be a set of $k$ sequences in $\Q_{n,\ell}$. 

\begin{equation}
d(\rep{Q}) = n - \max_{\Phi\in S_\ell^k} s(\Phi(Q)) = n - \max_{1\leq i \leq n} v(\X_i(Q))
\label{eqxiq}
\end{equation}
\end{lemma}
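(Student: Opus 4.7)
The plan is to verify the second equality in (\ref{eqxiq}); the first is merely a restatement of (\ref{diffx}). I would prove
$$\max_{\Phi \in S_\ell^k} s(\Phi(Q)) \;=\; \max_{1\leq i\leq n} v(\X_i(Q))$$
by establishing the two inequalities separately, leveraging Lemma~\ref{mcs} for one direction and Lemma~\ref{cn} for the other.

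For the direction ``$\leq$'', I would let $\Phi^\ast\in S_\ell^k$ be a sequence of permutations achieving $s^\ast := \max_\Phi s(\Phi(Q))$, and let $Y\subseteq C(Q)$ denote the set of cross sections that are constant in $\Phi^\ast(Q)$. The converse argument in the proof of Lemma~\ref{mcs} applies unchanged to any two members of $Y$: if $c=\seq{a_1,\ldots,a_k}$ and $c'=\seq{b_1,\ldots,b_k}$ in $Y$ were neither identical nor incompatible, one could find positions $i,j$ with $a_i=b_i$ and $a_j\neq b_j$, and the constancy of $\Phi^\ast(c),\Phi^\ast(c')$ would force $\phi^\ast_j(a_j)=\phi^\ast_j(b_j)$, a contradiction. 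Hence $Y$ is pairwise connected. Choosing any index $i$ with $c_i\in Y$, the set $Y$ is pairwise connected and contains $c_i$, so it is contained in some maximally connected subset $X\in\X_i(Q)$, yielding $v(\X_i(Q))\geq |X|\geq |Y|=s^\ast$.

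For the direction ``$\geq$'', I would fix $i^\ast$ attaining $\max_i v(\X_i(Q))$ together with a maximally connected $X\in\X_{i^\ast}(Q)$ of cardinality $v(\X_{i^\ast}(Q))$. Because the cross sections in $X$ are pairwise connected by definition, Lemma~\ref{cn} provides a sequence of permutations $\Phi^\ast\in S_\ell^k$ that simultaneously maps every cross section in $X$ to a constant cross section of $\Phi^\ast(Q)$. Thus $s(\Phi^\ast(Q))\geq |X|=v(\X_{i^\ast}(Q))$, which gives the desired bound.

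The argument is short because the two preceding lemmas do most of the work; the only conceptual point to spell out is that the pairwise characterization of Lemma~\ref{mcs} extends to arbitrary collections of simultaneously-constant cross sections, since the obstruction in that lemma is entirely pairwise in nature. I expect the main obstacle to be purely expository: making sure the reduction from the $k$-wise constancy statement to the pairwise statement of Lemma~\ref{mcs} is transparent, and that the appeal to Lemma~\ref{cn} in the reverse direction is justified by the pairwise-connected hypothesis inherited from the definition of a maximally connected subset.
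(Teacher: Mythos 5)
Your proposal is correct and follows essentially the same route as the paper, which justifies the lemma in one sentence by invoking the converse direction of Lemma~\ref{mcs} (two simultaneously constant cross sections must be connected) together with Lemma~\ref{cn} (a pairwise connected family can be made simultaneously constant). Your writeup merely makes explicit the two inequalities that the paper leaves implicit.
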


Let $T = \rep{Q} = \seq{\rep{q_1},\rep{q_2}}$ be a set of two sequence patterns in $\T_{n,\ell}$ and $C(Q) = \seq{c_1;c_2;\ldots;c_n}$ be the sequence of all cross sections in~$Q$. 
To compute the exact distance of the sequence patterns in~$\rep{Q}$, we construct an $\ell\times\ell$ matrix $A_Q$ as follows. 
For any cross section $c = \seq{a,b} \in \C_{2,\ell}$ where $a,b\in L$, if $c$ appears $m$ times in $C(Q)$, then the entry of $A_Q$ at row $a$ and column $b$ is $A_Q(a,b) = m$.
Moreover, any sequence of permutations $\Phi = \seq{\phi_1,\phi_2} \in S_\ell^2$ can be written as a permutation matrix~$P_\Phi$, where $\phi_1$ permutes rows and $\phi_2$ permutes columns.
Note that the diagonal entries of $A_Q$ record the number of constant cross sections in~$Q$.
Thus Formula~(\ref{diffx}) can be written as Formula~(\ref{diffm}).

\begin{equation}
d(\rep{Q}) = n - \max_{\Phi \in S_\ell^2} \mathrm{tr}(P_\Phi A_Q)
\label{diffm}
\end{equation}

\noindent
This is equivalent to the linear assignment problem, and we can use the Hungarian algorithm or Kuhn-Munkres algorithm to compute the exact distance between two sequence patterns in polynomial time~\cite{Kuhn1955,Munkres1957}.
In general, computing the exact distance of a set $T = \rep{Q} = \seq{\rep{q_1},\rep{q_2},\ldots,\rep{q_k}}$ of sequence patterns in $\T_{n,\ell}$ is equivalent to the $k$-dimensional assignment problem, and algorithms to solve the assignment problem, for example in~\cite{Li2021}, can be used to compute the exact distance of sequence patterns.

\section{Discussion}\label{sec4}

We have formally defined the Hamming distance for $k$ sequence patterns of length $n$ and level $\ell$.
We have computed the maximal Hamming distance of $k$ sequence patterns of length $n$ and level $\ell$ and demonstrated how to compute the exact distance between two sequence patterns of length $n$ and level $\ell$.
There remains considerable scope to modify the definitions and ask various questions.
For example, we can impose restrictions such as all symbols must be present in a sequence or present symbols must appear twice;
inspired by coloring problems, we can require that adjacent elements in a sequence must have distinct symbols.
With these constraints, what is the maximal distance for such sequence patterns?
Lastly, it would also be interesting to check if the distance for $k$ sequence patterns is a metric under some generalization of the triangle inequality, which can involve hypergraphs~\cite{Friedgut2004}.

\section*{Implementation}
Code to compute the numbers and the distances of sequence patterns is available at \url{https://github.com/pliumath/sequence-patterns}

\section*{Acknowledgements}
P.L. was partially supported by the grant of the Federal Government of Canada's Canada 150 Research Chair program to Prof.~C. Colijn and by the National Science Foundation DMS/NIGMS award \#2054347 to Prof. M. V\'azquez. 
J.N. was supported by a doctoral scholarship from the China Scholarship Council. 

\bibliographystyle{plain}
\bibliography{references.bib}

\end{document}